\newtheorem{theorem}{Theorem}[section]
\newtheorem{lemma}[theorem]{Lemma}
\newtheorem{meta-theorem}[theorem]{Meta-Theorem}
\newtheorem{corollary}[theorem]{Corollary}
\definecolor{darkgreen}{rgb}{0,0.5,0}
\Crefname{lemma}{Lemma}{Lemmas}
\Crefname{claim}{Claim}{Claims}
\Crefname{remark}{Remark}{Remarks}
\Crefname{observation}{Observation}{Observations}
\algnewcommand\algorithmicswitch{\textbf{switch}}
\algnewcommand\algorithmiccase{\textbf{case}}
\renewcommand{\paragraph}[1]{\vspace{0.15cm}\noindent {\bf #1.}}
\newcommand{\FullOrShort}{full}
  \newcommand{\fullOnly}[1]{#1}
  \newcommand{\shortOnly}[1]{}
    \newcommand{\shortOnly}[1]{#1}
    \newcommand{\fullOnly}[1]{}
\date{}
\title[Color Degree Sequences in Gallai Colorings]
{Full Characterization of Color Degree Sequences in Complete Graphs Without Tricolored Triangles}
\author{Anton Trygub}
\address{Massachusetts Institute of Technology}
\email{trygub@mit.edu}
\begin{document}

\maketitle

\begin{abstract}
For an edge-colored complete graph, we define the color degree of a node
as the number of colors appearing on edges incident to it.
In this paper, we consider colorings that don't contain tricolored triangles
(also called rainbow triangles); these colorings are also called Gallai colorings.

We give a complete characterization of all possible color degree sequences
$d_1 \le d_2 \le \dots \le d_n$ that can arise on a Gallai coloring of $K_n$:
it is necessary and sufficient that
\[ \sum_{i = k}^n \frac{1}{2^{d_i - d_{k-1}}} \ge 1 \]
holds for all $1 \le k \le n$, where $d_0=0$ for convenience.
As a corollary, this gives another proof of a 2018 result of Fujita, Li, and Zhang
who showed that the minimum color degree in such a coloring is at most $\log_2n$.
\end{abstract}

\setcounter{page}{1}
\thispagestyle{empty}

\vspace{1cm}


\section{Introduction}
\subsection{Background}
In this paper, we consider edge colorings of complete graphs on $n$ nodes (later denoted $K_n$)
that don't contain tricolored triangles (also called rainbow triangles);
these colorings are also called \emph{Gallai colorings}.

The name ``Gallai colorings'' comes from the 1967 paper by Gallai,
who showed the following result.
\begin{theorem}
    [\cite{gallai}]
    \label{thm:gallai}
    In any rainbow triangle-free coloring of a complete graph,
    there exists a non-trivial partition of the vertices such that between the parts,
    there are a total of at most two colors, and,
    between every pair of parts, there is only one color on the edges.
\end{theorem}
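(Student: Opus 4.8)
The plan is to induct on $n$ (the number of vertices), and at each step to look for a \emph{split}: a nontrivial partition of $V$ whose between-block color structure can be read off from a single color class. Write $c(e)$ for the color of an edge $e$, let $[r]=\{1,\dots,r\}$ be the set of colors used, and for a set $S\subseteq[r]$ let $G_S$ denote the graph of edges whose color lies in $S$. The workhorse, which I would prove first, is the following consequence of rainbow-freeness. If $C$ and $C'$ are two distinct connected components of $G_S$, then all edges between $C$ and $C'$ receive one and the same color, and that color lies outside $S$. Indeed, no such edge can lie in $S$; and if $x,x'\in C$ are joined by an edge of some color $s\in S$ while $y\in C'$, then the triangle $xx'y$ has one edge of color $s\in S$ and two edges of colors outside $S$, so rainbow-freeness forces $c(xy)=c(x'y)$. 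Propagating this equality along a path of $G_S$ inside $C$, and then symmetrically inside $C'$, shows the color between $C$ and $C'$ is constant.

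With this in hand I would fix any color $c$ that actually occurs and examine $G_{\{c\}}$ together with its complement $\overline{G_{\{c\}}}=G_{[r]\setminus\{c\}}$, the graph of non-$c$ edges. There are two favorable cases. If $\overline{G_{\{c\}}}$ is disconnected, its components $D_1,\dots,D_p$ (with $p\ge 2$) form a partition in which every edge between two distinct $D_i$ is colored $c$, since no non-$c$ edge crosses components; the reduced coloring on the blocks is then monochromatic, so this partition already satisfies the theorem and we stop. If instead $G_{\{c\}}$ is disconnected, its components $C_1,\dots,C_q$ form a partition; since $c$ occurs, some block has at least two vertices, so $q<n$, and by the lemma above the color between any two blocks is well defined and avoids $c$. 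The induced coloring on the $q$ blocks is again rainbow-free (a rainbow triangle there would lift to one on representatives), so by the induction hypothesis it has a valid partition, which pulls back to one on $V$ with the same between-block color in each pair and at most two colors overall.

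The base case is $n\le 2$ (equivalently, a coloring using at most two colors, where the all-singletons partition works); the recursion occurs only in the second favorable case and there it strictly decreases $n$, while the first case terminates immediately, so termination is not an issue. The hard part will be the remaining configuration, in which for the chosen $c$ --- and, for the argument to close, for \emph{every} color $c$ --- both $G_{\{c\}}$ and its complement are connected. Here no single color yields a split; in the language of modular decomposition this is the ``prime'' case, where $V$ has no nontrivial homogeneous set. I expect the crux of the whole proof to be showing that a rainbow-free coloring admitting no such split can use at most two colors in total; once that is established, the all-singletons partition finishes the argument. Making this irreducible-case statement precise --- leveraging the local constraint that every triangle sees at most two colors to propagate a single global two-coloring --- is where the real work lies, and it is the step I would spend the most effort on.
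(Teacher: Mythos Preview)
The paper does not prove this theorem; it is quoted from Gallai's 1967 paper and used as a black box (its only role is to yield Corollary~\ref{cor:gallai}). So there is no in-paper proof to compare against, and I can only evaluate your proposal on its own merits.

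Your component lemma and the inductive reduction through the quotient graph are correct and standard. The problem is that what you call the ``hard part'' is not a loose end to be tidied up later --- it is the entire content of the theorem. You reduce to the claim that a rainbow-free coloring in which every $G_{\{c\}}$ and every $\overline{G_{\{c\}}}$ is connected must use at most two colors, and then say only that this ``is where the real work lies.'' But the contrapositive of that claim (with $\ge 3$ colors, some color class or its complement disconnects) is essentially Corollary~\ref{cor:gallai} of this very paper, and the paper derives that corollary \emph{from} Gallai's theorem, not the other way around. In other words, your outline has reduced the theorem to a statement of comparable strength and stopped there. A complete proof needs an independent argument for the prime case --- for instance, via a careful analysis of the color neighborhoods of a fixed vertex, or a maximal-module argument --- and none is supplied.
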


A lot of properties of Gallai colorings have been discovered over the years. Erdős, Simonovits and Sós in \cite[Appendix A]{EST} showed that the maximum possible number of different colors in a Gallai coloring of $K_n$ is $n-1$. Gyárfás and Simonyi \cite{GS} showed that in every Gallai coloring there is a monochromatic spanning tree with height at most two, and that in any Gallai coloring of $K_n$ there exists a color with a maximum degree at least $\frac{2n}{5}$. The interest of this paper is in the color degrees of nodes in Gallai colorings.


\subsection{Complete characterization of color degrees}
Given a edge-coloring of a graph,
we define the \emph{color degree} $d(i)$ of vertex $i$
as the number of distinct colors that appear on edges incident to $i$.
In 2018, Fujita, Li, and Zhang proved an upper bound
on the minimum color degree that can appear in Gallai colorings.
\begin{theorem}
    [{\cite[Theorem 3]{FLZ}}]
    In any Gallai coloring of $K_n$ with vertex set $\{1,2,\dots,n\}$,
    there exists a vertex $i$ with $d(i) \le \log_2 n$.
    \label{thm:FLZ}
\end{theorem}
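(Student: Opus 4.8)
The plan is to derive \Cref{thm:FLZ} from the stronger quantitative statement that
\[ \sum_{v} 2^{-d(v)} \ge 1 \]
holds for every Gallai coloring of $K_n$. Once this is in hand, averaging over the $n$ vertices produces some $v$ with $2^{-d(v)} \ge 1/n$, i.e.\ $d(v) \le \log_2 n$, which is exactly the claim. It is worth noting at the outset that a naive induction on $\min_v d(v)$ does not close: recursing one level into the smallest part of a Gallai partition can lose a factor of about $\log_2(4/3)$ when that part sees two colors outside, whereas the Kraft-type sum $\sum_v 2^{-d(v)}$ keeps the bookkeeping exact. (This is, of course, precisely the $k=1$ instance of the characterization this paper establishes.)

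To prove $\sum_v 2^{-d(v)} \ge 1$ I would argue by strong induction on $n$, the case $n=1$ being trivial. For $n \ge 2$, apply Gallai's structure theorem (\Cref{thm:gallai}) to get a partition $V_1,\dots,V_t$ with $t \ge 2$ such that the reduced complete graph $R$ on these parts is edge-colored with at most two colors overall and each pair of parts is joined by a single color. The key observation is that for a vertex $v \in V_i$, every edge leaving $V_i$ carries the color of the corresponding edge of $R$; hence, writing $d_i(v)$ for the color degree of $v$ in the induced (and still Gallai) coloring on $V_i$ and $c_i \in \{1,2\}$ for the number of colors on edges of $R$ incident to the $i$-th vertex, we get $d(v) \le d_i(v) + c_i$. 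Summing and invoking the induction hypothesis on each part yields
\[ \sum_v 2^{-d(v)} \;\ge\; \sum_{i=1}^{t} 2^{-c_i} \sum_{v \in V_i} 2^{-d_i(v)} \;\ge\; \sum_{i=1}^{t} 2^{-c_i}, \]
so it suffices to show $\sum_{i=1}^t 2^{-c_i} \ge 1$ for any $2$-coloring of $K_t$ with $t \ge 2$.

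This last point is the only genuinely combinatorial step. Letting $p$ be the number of \emph{monochromatic} vertices of $R$ (those with $c_i = 1$), the inequality $\sum_i 2^{-c_i} \ge 1$ is equivalent to $p + t \ge 4$. For $t \ge 4$ this is automatic; for $t = 3$, a triangle is an odd cycle so two of its three edges share a color and their common endpoint is monochromatic, giving $p \ge 1$; and for $t = 2$ both endpoints of the single edge are monochromatic, so $p = 2$. This completes the induction and hence the theorem. I expect this lemma on $2$-colorings — specifically, the realization that one must control the entire sum $\sum_i 2^{-c_i}$ rather than merely locate one monochromatic part, together with the $t = 2,3$ boundary checks — to be where essentially all the content sits; the rest is a clean application of Gallai's theorem.
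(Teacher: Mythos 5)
Your proposal is correct, and while it follows the same overall strategy as the paper---strengthen \Cref{thm:FLZ} to the Kraft-type inequality $\sum_v 2^{-d(v)} \ge 1$ (the paper's \Cref{thm:mainC}), prove that by induction on $n$, and finish by averaging---the inductive decomposition is genuinely different. The paper does not invoke the full Gallai partition: it first disposes of colorings with at most two colors directly (each $d(v)\le 2$, so the sum is at least $n/4$), and otherwise uses only \Cref{cor:gallai} to find a single color class spanning a disconnected graph, takes one component $A$, compresses $A$ to a vertex of color degree $k$ (the number of distinct colors leaving $A$, well defined by \Cref{lem:simple}), and applies the induction hypothesis twice, to the quotient graph and to $A$, combining via $d(v)\le d_1(v)+k$ for $v\in A$. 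You instead recurse on all parts $V_1,\dots,V_t$ of the Gallai partition simultaneously, using $d(v)\le d_i(v)+c_i$ with $c_i\in\{1,2\}$, which shifts the combinatorial content into the separate lemma that $\sum_i 2^{-c_i}\ge 1$, equivalently $p+t\ge 4$, for any edge-coloring of the reduced $K_t$ ($t\ge 2$) with at most two colors; your boundary checks for $t=2$ and $t=3$ are right, and the case of a monochromatic reduced graph is subsumed since then $p=t\ge 2$. What each approach buys: yours is a single uniform induction with no separate two-color case and makes the role of the reduced graph explicit, at the cost of needing the full partition statement and the small $p+t\ge 4$ analysis; the paper's two-piece recursion (quotient plus one component) needs only the weaker corollary that some color is disconnected, and the quotient automatically accounts for an arbitrary number $k$ of outside colors rather than just $c_i\le 2$, which is also the form of argument that generalizes to the proof of \Cref{thm:mainA}.
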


Construction  due to Li and Wang ({\cite[Proposition 2.6]{LW}}) shows that this bound is tight.

In this paper, we will prove a generalization of this result
which characterizes exactly what sequences $d(i)$ may arise from a
Gallai coloring.
The main result of our paper is the following pair of theorems:
\begin{theorem}
    Let $d(1) \le d(2) \le \dots \le d(n)$ be the color degree sequence
    of some Gallai coloring of graph $K_n$.
    Let's also define $d(0) = 0$.
    Then, for each $1 \le k \le n$, the following inequality holds:
    \[ \sum_{i = k}^n \frac{1}{2^{d(i) - d(k-1)}} \ge 1 \]
    \label{thm:mainA}
\end{theorem}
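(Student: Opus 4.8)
The plan is to prove the inequality by induction on $n$, using the structure theorem of Gallai (Theorem~\ref{thm:gallai}) as the engine. Given a Gallai coloring of $K_n$, Gallai's theorem gives a partition $V_1, \dots, V_m$ (with $m \ge 2$) into "blocks" such that the edges between blocks use at most two colors total, and each pair of blocks is monochromatic. The reduced graph on the blocks is thus a complete graph edge-colored with at most two colors — i.e., a 2-coloring of $K_m$. Each vertex $v$ in block $V_j$ has color degree equal to the number of colors inside its own block $V_j$ that appear at $v$, plus the number of the (at most two) "global" colors that appear on edges from $v$ to other blocks. So I'd want to recursively apply the inequality inside each block, and then combine.

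**The quantity to track.** The key observation is that the inequality $\sum_{i=k}^n 2^{-(d(i) - d(k-1))} \ge 1$ for all $k$ is equivalent to the single statement: for every threshold $t$, $\sum_{i : d(i) \ge t} 2^{-d(i)} \ge 2^{-d'}$, where $d'$ is the largest color degree strictly below $t$ (or we can phrase it via $d(k-1)$). Actually the cleanest reformulation I would isolate first: define $W_{\ge t} := \sum_{v : d(v) \ge t} 2^{-d(v)}$; then the family of inequalities is equivalent to $W_{\ge t} \ge 2^{-s}$ for every $t$, where $s = \max\{d(v) : d(v) < t\}$ (with $\max \emptyset = 0$). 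So the right inductive hypothesis is a lower bound of Kraft-inequality type on the "weight" $\sum_v 2^{-d(v)}$ restricted to high-degree vertices. I would prove by induction the statement: in any Gallai coloring of $K_n$, for any integer $r \ge 0$, $\sum_{v} 2^{-\max(d(v) - r,\, 0)} \ge 1$ — or some such normalized form — and then derive Theorem~\ref{thm:mainA} by taking $r = d(k-1)$ and discarding the vertices with $d(v) \le d(k-1)$ (whose contribution in that truncated sum is at least... hmm, one needs the vertices with small degree to not hurt, which is why the $\max(\cdot,0)$ truncation or an alternative restriction to $d(v) > d(k-1)$ is the right move). I'd nail down the exact form of the inductive claim so that it restricts cleanly under the block partition.

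**The inductive step.** Given the partition into blocks $V_1, \dots, V_m$ with reduced 2-coloring of $K_m$, here is the combination step. For a base case, note $K_1$ or $K_2$ is trivial and $m=2$ with both blocks... one has to be careful: if one block is all of $V$ minus a point, the recursion doesn't shrink — but Gallai's partition is non-trivial so every block has size $< n$, and there are at least two blocks, so induction applies to each $V_j$ (as a Gallai coloring of $K_{|V_j|}$) and to the reduced $K_m$ (which is a Gallai coloring, being 2-colored, hence automatically rainbow-triangle-free). Now, a vertex $v \in V_j$ has $d(v) = d_j(v) + e_j$, where $d_j(v)$ is its color degree within $K[V_j]$ and $e_j \in \{0,1,2\}$ is the number of global colors incident to $v$ across the cut (this depends only on the block $V_j$'s adjacency pattern in the reduced graph, roughly — actually $e_j$ is the color degree of block $j$ in the reduced 2-coloring, which is $1$ or $2$). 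So $2^{-d(v)} = 2^{-e_j} \cdot 2^{-d_j(v)}$. Summing the appropriate truncated weights over $v \in V_j$ and using the inductive hypothesis inside $V_j$ gives a contribution $\ge 2^{-e_j} \cdot (\text{something} \ge 1)$ relative to block $j$'s "local baseline"; then summing over $j$ and comparing with the inductive hypothesis applied to the reduced graph $K_m$ (whose vertices have color degrees $e_j$) closes the induction. The point is that $\sum_j 2^{-e_j} \ge 1$ by the reduced-graph hypothesis, and the within-block factors only help.

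**Main obstacle.** The hard part will be getting the truncation/normalization in the inductive claim exactly right so that it is simultaneously (a) strong enough to imply all $n$ inequalities of Theorem~\ref{thm:mainA}, and (b) preserved under the block decomposition when the "shift" $r$ interacts with the global colors $e_j$ differently in different blocks. In particular, when we fix a threshold coming from $d(k-1)$ in the original graph, this threshold translates to *different* thresholds $d(k-1) - e_j$ inside different blocks $V_j$, and some of these may be negative or may wipe out a block entirely; I'd need to argue that a block contributing "nothing" above its threshold is fine because either it still contributes via vertices that survive, or the reduced-graph inequality absorbs the slack. Handling the two-color global structure carefully — especially the asymmetric case where one global color is much more prevalent than the other, and the edge case where a block sees only one global color ($e_j = 1$) — is where the bookkeeping is most delicate, and I expect that's where the real content of the argument lies.
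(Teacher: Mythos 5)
There is a genuine gap here: the proposal is an outline whose central step is exactly the point it leaves unresolved. The inductive claim you float, $\sum_v 2^{-\max(d(v)-r,\,0)} \ge 1$, is trivially true as soon as any single vertex has $d(v) \le r$ (that vertex alone contributes a term equal to $1$), so it cannot imply \Cref{thm:mainA}, whose content is precisely about the sum restricted to the vertices of color degree exceeding $d(k-1)$ with the low-degree vertices \emph{excluded}. The correct target is the restricted statement $\sum_{v:\,d(v)>s} 2^{-(d(v)-s)} \ge 1$, and for that statement the block-decomposition step you sketch does not go through as described, for two concrete reasons. First, for $v \in V_j$ one only has $\max(d_j(v),e_j) \le d(v) \le d_j(v)+e_j$, since the global colors across the cut may coincide with colors inside the block; hence a vertex can satisfy $d_j(v)+e_j > s$ while $d(v) \le s$, so the index set of your restricted sum is a \emph{subset} of the index set appearing in the block-level inductive hypothesis (with threshold $s-e_j$), and the term-by-term comparison $2^{-(d(v)-s)} \ge 2^{-(d_j(v)+e_j-s)}$ does not yield the needed lower bound over a smaller set. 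Second, a block lying entirely at or below the threshold contributes nothing to the restricted sum yet is still counted in the reduced-graph inequality $\sum_j 2^{-e_j} \ge 1$; and since $e_j \in \{1,2\}$, the reduced graph cannot itself be ``thresholded'' at any $s \ge 2$, so there is no obvious way for it to ``absorb the slack.'' You identify both difficulties in your ``main obstacle'' paragraph, but identifying them is not the same as resolving them, and no mechanism in the proposal does.

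For contrast, the paper does not run a direct induction on the general-$k$ statement at all. It proves the $k=1$ case (\Cref{thm:mainC}) by induction (via a disconnected color class, essentially your block argument), and then handles $k \ge 2$ by applying \Cref{thm:mainC} to the induced subgraph on $\{k,\dots,n\}$. The missing ingredient that makes this work is \Cref{lem:maxcase}: in a Gallai coloring of $K_{x+1}$ in which one vertex has full color degree $x$, a second vertex must also have color degree $x$ (and in fact the degrees are $x,x,x-1,\dots,1$). This is used to show that any vertex $i \ge k$ sends at most $d(k-1)$ distinct colors into $\{1,\dots,k-1\}$, so restricting to the top $n-k+1$ vertices decreases each color degree by at most $d(k-1)$. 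Your proposal contains no substitute for this structural lemma, and without it (or a genuinely new idea for the thresholded induction), the argument does not close.
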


\begin{theorem}
    Conversely, suppose
    $d_0 = 0 < d_1 \le d_2 \le d_3 \le \dots \le d_n$ are some integers,
    which satisfy the following condition for every $1 \le k \le n$:
    \[ \sum_{i = k}^n \frac{1}{2^{d_i - d_{k-1}}} \ge 1 \]
    Then, there exists a Gallai coloring of $K_n$ with $d(i) = d_i$.
    \label{thm:mainB}
\end{theorem}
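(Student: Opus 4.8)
The plan is to prove something slightly stronger: every sequence satisfying the hypothesis is realised by a \emph{tree colouring}. Given a rooted tree $T$ with leaf set $\{1,\dots,n\}$ in which every internal node has at least two children, assign a distinct colour $\gamma(v)$ to each internal node $v$ and colour the edge $ij$ of $K_n$ by $\gamma(v)$, where $v$ is the least common ancestor of $i$ and $j$. One checks at once that this is a Gallai colouring — in any triangle $\{i,j,k\}$, two of the three edges carry the colour $\gamma(v)$ of the shallowest node $v$ below which $i,j,k$ do not all lie in a single child's subtree — and that $d(i)$ is the depth of the leaf $i$ in $T$. Hence it suffices, given integers $0=d_0<d_1\le\dots\le d_n$ with $\sum_{i=k}^n 2^{-(d_i-d_{k-1})}\ge 1$ for every $k$, to build such a tree with leaf depths exactly $d_1,\dots,d_n$.

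I would build $T$ from the bottom up by repeatedly \emph{coalescing the deepest level}. Maintain a multiset of ``pending roots'', each carrying a target depth; initially these are the $n$ leaves, with targets $d_1,\dots,d_n$. While more than one pending root remains, let $D$ be the largest target present and $c$ its multiplicity. If $D\ge 2$: partition the $c$ pending roots of target $D$ into $\lfloor c/2\rfloor$ groups of size at least $2$ (all of size $2$ when $c$ is even; all of size $2$ except one of size $3$ when $c$ is odd), make each group the children of a new internal node, and give these new nodes target $D-1$. If $D=1$: make all the remaining (target-$1$) pending roots the children of one new root. The number of pending roots strictly decreases, so this halts; provided it never gets stuck, it outputs a rooted tree in which every internal node has $\ge 2$ children and leaf $i$ has depth $d_i$.

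The crux is the invariant that, just before each coalescing step, the multiset of current targets, written in nondecreasing order, again satisfies the hypothesis of the theorem. This holds initially; granting it in general, the largest target always has multiplicity at least $2$ (otherwise the inequality at the last index would fail), so every step is legal, and once $D$ falls to $1$ the target multiset must be $(1,\dots,1)$. It then remains to check that one step with $D\ge 2$ — replacing $c$ copies of $D$ by $p:=\lfloor c/2\rfloor$ copies of $D-1$ — preserves the hypothesis. Write $r$ for the former multiplicity of $D-1$ and $g$ for the largest target that is $<D-1$ (taking $g=0$ if there is none). The inequalities at indices strictly inside the new block of $(D-1)$'s are trivial; the inequality at the first index of that block reads $r+p\ge 2^{D-1-g}$, which follows from the old inequality there (which gives $2r+c\ge 2^{D-g}$) using that $2^{D-g}$ is an integer. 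For an index $k$ lying below the new block, the left side of the new inequality equals that of the old one minus $2^{d_{k-1}}(c-2p)2^{-D}$; this is $0$ when $c$ is even, so the inequality is inherited unchanged.

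The one substantive point is the odd case, where the loss at each index $k$ below the block is exactly $2^{d_{k-1}-D}$. Here I would use the following lemma: if $\sum_{i=k}^n 2^{-(d_i-d_{k-1})}=1$, then the shifted sequence $(d_i-d_{k-1})_{i\ge k}$ — which has at least two terms, since $k$ lies below the deep block — has Kraft sum $1$ and hence is the multiset of leaf depths of a full binary tree; in such a tree the deepest leaves come in sibling pairs, so the multiplicity of the maximal value $D-d_{k-1}$, namely $c$, is even. Thus when $c$ is odd the inequality at $k$ is strict; since its left side is a positive integer multiple of $2^{d_{k-1}-D}$, it is in fact at least $1+2^{d_{k-1}-D}$, which exactly absorbs the loss. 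Establishing this parity lemma, and the bookkeeping that surrounds the odd case, is the step I expect to take the most care; the rest is routine.
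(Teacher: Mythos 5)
Your proposal is correct, but it takes a genuinely different route from the paper's. The paper proceeds by induction on $n$: it shows that the given sequence can be shortened by one entry (dropping one copy of the maximal value $t$, and lowering one remaining entry to $t-1$ exactly when the multiplicity of $t$ is even) while preserving the inequalities, and then recovers the coloring via a vertex-duplication lemma (duplicate a vertex of an $(n-1)$-vertex Gallai coloring, coloring the new edge with either an old or a fresh color); the realizing colorings thus arise implicitly, one vertex at a time. You instead prove a stronger structural statement -- every sequence satisfying the inequalities is the degree sequence of an LCA coloring coming from a rooted tree all of whose internal nodes have at least two children -- and you build the tree by collapsing the entire deepest level at once, in Kraft/Huffman style. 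The combinatorial heart is the same in both arguments: when the multiplicity $c$ of the deepest value $D$ is odd, the inequality at every lower index $k$ must be strict by at least one quantum $2^{d_{k-1}-D}$. The paper gets this by multiplying the inequality at $k$ by $2^{t-d_{k-1}}$ and comparing parities; your detour through Kraft equality and full binary trees proves the same fact and could be replaced by that one-line mod-2 computation. What your approach buys is the explicit hierarchical realization of every achievable sequence (and a self-contained construction independent of the duplication lemma); what the paper's buys is brevity, since each inductive step only touches the last one or two entries. Two small points you should still write out: the parity lemma you flagged, and the check that a coalescing step with $D\ge 2$ can never leave a single pending root -- if it did, that root's target $t\ge 1$ would have to satisfy $2^{-t}\ge 1$ by the invariant you just preserved (equivalently, the scenario forces all targets equal to $D$ with $c\le 3$, while the old inequality at $k=1$ gives $c\ge 2^{D}\ge 4$) -- so the process always terminates through the $D=1$ step, with the final root correctly at depth $0$.
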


Notice in particular the $k=1$ case of Theorem~\ref{thm:mainA}
is independently of interest. It reads:
\begin{theorem}
    For any Gallai coloring of $K_n$, the following inequality holds:
    \[ \sum_{i = 1}^n \frac{1}{2^{d(i)}} \ge 1 \]
    \label{thm:mainC}
\end{theorem}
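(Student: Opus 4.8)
The plan is to prove \Cref{thm:mainC} by induction on $n$, using Gallai's structure theorem (\Cref{thm:gallai}) to reduce to a statement about $2$-edge-colored complete graphs. (Formally \Cref{thm:mainC} is the $k=1$ instance of \Cref{thm:mainA}, but the self-contained argument below is short and previews the general case.) For the base case $n=1$ the single vertex has color degree $0$ and the sum equals $1$. For $n\ge 2$, apply \Cref{thm:gallai} to obtain a non-trivial partition $V_1,\dots,V_m$ of the vertex set with $m\ge 2$, such that the edges between parts use at most two colors in total and each pair of parts is joined by a single color. Contracting each part to a point yields a $2$-edge-colored copy of $K_m$; write $r_j$ for the color degree of the $j$-th vertex in this reduced graph, so $1\le r_j\le 2$ (at least $1$ since $m\ge 2$, at most $2$ since only two colors occur between parts).

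The key observation is that for $v\in V_j$, every color appearing at $v$ is either a color on an edge of $v$ inside $V_j$ or the color of a reduced edge at $j$; hence $d(v)\le d_{V_j}(v)+r_j$, where $d_{V_j}(v)$ is the color degree of $v$ in the coloring of $K_{|V_j|}$ induced on $V_j$. That induced coloring is still rainbow-triangle-free, and $|V_j|<n$, so the induction hypothesis gives $\sum_{v\in V_j}2^{-d_{V_j}(v)}\ge 1$ (for $|V_j|=1$ this reads $2^0\ge 1$). Therefore
\[
\sum_{v}2^{-d(v)}=\sum_{j=1}^m\sum_{v\in V_j}2^{-d(v)}\ \ge\ \sum_{j=1}^m 2^{-r_j}\sum_{v\in V_j}2^{-d_{V_j}(v)}\ \ge\ \sum_{j=1}^m 2^{-r_j},
\]
so it suffices to prove: in any $2$-edge-coloring of $K_m$ with $m\ge 2$, $\sum_{j=1}^m 2^{-r_j}\ge 1$.

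For that lemma, let $a$ be the number of vertices of the reduced $K_m$ incident to edges of only one color. Then $\sum_{j}2^{-r_j}=a/2+(m-a)/4=(a+m)/4$, so the claim is equivalent to $a+m\ge 4$. This is immediate when $m\ge 4$; for $m=2$ both vertices have only one incident edge, so $a=2$; and for $m=3$, among the three edges of the triangle two share a color, and their common vertex has color degree $1$, so $a\ge 1$. In every case $a+m\ge 4$, which closes the induction.

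The only genuine content beyond bookkeeping is the reduction to the $2$-colored reduced graph together with the inequality $d(v)\le d_{V_j}(v)+r_j$ — this is exactly where rainbow-triangle-freeness enters, via \Cref{thm:gallai} — and the base-size analysis $m\in\{2,3\}$ of the lemma, which must be checked by hand but is elementary. I expect no serious obstacle: the whole argument is a Kraft-inequality-style induction driven by Gallai's partition theorem.
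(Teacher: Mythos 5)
Your proof is correct, but it takes a genuinely different route from the paper's. The paper does not invoke the full Gallai partition in its induction: it only uses the weaker consequence (Corollary~\ref{cor:gallai}) that, when at least three colors are present, some color class is disconnected. It then contracts a single connected component $A$ of that color to one vertex (using Lemma~\ref{lem:simple} to see that each outside vertex sends a single color into $A$), and applies the induction hypothesis \emph{twice} — once to the quotient graph, which is still an arbitrary Gallai coloring containing the compressed vertex of degree $k$, and once to $A$ — combining them via the same inequality $d(v)\le d_1(v)+k$ that you use; the at-most-two-colors case is dispatched by $n/4\ge 1$. You instead contract all parts of the Gallai partition at once, apply the induction hypothesis only to the parts, and handle the quotient by a self-contained counting lemma: in any $2$-edge-coloring of $K_m$ with $m\ge 2$ one has $\sum_j 2^{-r_j}=(a+m)/4\ge 1$, verified by hand for $m\in\{2,3\}$ (your pigeonhole argument for $m=3$ is fine, as two same-colored edges of a triangle meet at a vertex of color degree $1$). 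The trade-off: your version leans on the full strength of Theorem~\ref{thm:gallai} and needs the small-$m$ case analysis, but it avoids applying the induction hypothesis to the quotient and removes the separate two-color case; the paper's version needs only the disconnected-color corollary plus Lemma~\ref{lem:simple}, at the cost of a second use of the induction hypothesis. Both are valid Kraft-inequality-style inductions, and your key inequality $d(v)\le d_{V_j}(v)+r_j$ is exactly the paper's $d(v)\le d_1(v)+k$ specialized to the full partition.
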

Our result then implies Theorem~\ref{thm:FLZ} directly.
(Indeed, if $x$ is the minimum color degree, then
\[ \frac{n}{2^x} \ge \sum_{i = 1}^n
\frac{1}{2^{d(i)}} \ge 1 \implies x \le \log_2{n} \]
so Theorem~\ref{thm:FLZ} follows.)

\subsection{Roadmap}
The rest of the paper is structured as follows.
After setting up a small amount of notation and tools in Section~\ref{sec:setup},
we will prove Theorem~\ref{thm:mainC} in Section~\ref{sec:proofC}.
Then we will use Theorem~\ref{thm:mainC} to prove
Theorem~\ref{thm:mainA} in Section~\ref{sec:proofA}.

\subsection{Acknowledgments} I want to thank Evan Chen for helpful discussions and advice with structuring the paper.  

\section{Setup}
\label{sec:setup}
We will denote the color of the edge between nodes $i, j$ by $c(i, j)$.

We will need the following simple corollary of Theorem~\ref{thm:gallai}, also present in {\cite[Lemma A]{GS}}.
\begin{corollary}
    If Gallai coloring contains at least three different colors,
    then there is a color that spans a disconnected graph.
    \label{cor:gallai}
\end{corollary}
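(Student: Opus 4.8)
The plan is to derive this directly from Gallai's structural theorem (Theorem~\ref{thm:gallai}), with no additional machinery. Suppose we are given a Gallai coloring of $K_n$ that uses at least three colors. Applying Theorem~\ref{thm:gallai} produces a nontrivial partition $V(K_n) = V_1 \cup V_2 \cup \dots \cup V_m$ with $m \ge 2$, each $V_j$ nonempty, such that the edges joining distinct parts carry at most two colors in total. The first step is to observe that, since the coloring as a whole uses at least three colors while the inter-part edges use at most two of them, there must be a color $c$ that never appears on an edge between two different parts; equivalently, every edge of color $c$ lies inside a single part $V_j$.

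The second step is to argue that the spanning subgraph $G_c = (V(K_n), E_c)$ formed by the edges of color $c$ is disconnected. Fix $u \in V_1$ and $v \in V_2$, which is possible since $m \ge 2$ and both parts are nonempty. Any $u$--$v$ walk in $G_c$ would have to use an edge leaving $V_1$, but every edge of $G_c$ stays within one part, so no such walk exists; hence $u$ and $v$ lie in different components of $G_c$, and $G_c$ is disconnected. This completes the argument. There is no serious obstacle: the only points needing care are that the partition furnished by Theorem~\ref{thm:gallai} is genuinely nontrivial, so that two distinct nonempty parts are available, and that the bound of ``at most two colors'' between parts leaves room for a third color precisely because we assumed at least three colors overall.
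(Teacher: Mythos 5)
Your argument is correct and is essentially the paper's own proof: take the Gallai partition, pick a color not among the (at most two) inter-part colors, and note that its edges stay inside parts, so the spanning subgraph of that color cannot be connected. You simply spell out the disconnectedness step in more detail than the paper does.
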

\begin{proof}
    Suppose that there are at least three colors.
    Consider partition from Theorem~\ref{thm:gallai},
    and any color which isn't among two colors connecting nodes
    from different parts.
    The graph spanned by this color can't be connected.
\end{proof}

We will also prove the following simple lemma.
\begin{lemma}
    Suppose that color $1$ spans a disconnected graph,
    let $A$ be one of the connected components of this graph.
    Then for any nodes $u, v \in A$ and $w \notin A$, $c(u, w) = c(v, w)$.
    \label{lem:simple}
\end{lemma}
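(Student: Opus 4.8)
The plan is to reduce the statement to a one-triangle argument by exploiting the fact that $A$ is a connected \emph{component}, not merely some connected subgraph. First I would record the basic consequence: for every $u \in A$ and every $w \notin A$ we have $c(u,w) \neq 1$, since a color-$1$ edge $uw$ would force $w$ into the same color-$1$ component as $u$, contradicting $w \notin A$. I would also observe that it suffices to treat the case in which $u$ and $v$ are joined by a color-$1$ edge: because the color-$1$ edges inside $A$ form a connected graph on the vertex set $A$, any two $u,v \in A$ are linked by a path $u = x_0, x_1, \dots, x_k = v$ with every $x_j \in A$ and $c(x_{j-1}, x_j) = 1$ for all $j$.

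The core step uses the rainbow-triangle-free (Gallai) hypothesis. Take $u, v \in A$ with $c(u,v) = 1$ and any $w \notin A$, and consider the triangle on $u, v, w$. By the observation above, $c(u,w) \neq 1$ and $c(v,w) \neq 1$, so the edge $uv$ has a color different from both $c(u,w)$ and $c(v,w)$. Since the triangle is not tricolored, two of its three edges must have equal color, and the only possibility left is $c(u,w) = c(v,w)$, which is exactly the desired conclusion in this special case.

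Finally I would chain these equalities along the path from the first paragraph. Each consecutive pair $x_{j-1}, x_j$ lies in $A$ and is joined by a color-$1$ edge, and $w \notin A$, so the core step applies and gives $c(x_{j-1}, w) = c(x_j, w)$; composing these yields $c(u,w) = c(x_0,w) = c(x_1,w) = \dots = c(x_k,w) = c(v,w)$.

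I do not expect a real obstacle here: the only subtlety worth stating carefully is the distinction between ``$A$ is color-$1$-connected'' and ``$A$ is a color-$1$ connected component'' — it is the latter that guarantees no color-$1$ edge leaves $A$, and this is precisely what makes the triangle in the core step non-rainbow. Everything else is a routine induction along a path.
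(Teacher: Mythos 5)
Your proof is correct and follows essentially the same route as the paper's: the triangle argument for a color-$1$ edge $uv$ (using that no color-$1$ edge leaves the component, so $c(u,w), c(v,w) \neq 1$ and the non-rainbow condition forces $c(u,w) = c(v,w)$), followed by chaining these equalities along a color-$1$ path between arbitrary $u, v \in A$. Your explicit remark about the component-versus-connected-subgraph distinction is a nice touch, but the argument is the same.
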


\begin{proof}
First, consider any edge $(u_1, v_1)$ of color $1$ in $A$.
For any node $w\notin A$, consider triangle $u_1v_1w$.
Then $c(u_1, v_1) = 1$, but $c(u_1, w) \neq 1, c(v_1, w) \neq 1$, so $c(u_1, w) = c(v_1, w)$.

Now, consider any nodes $u, v \in A$.
There is a simple path of color $1$ between then,
let it be $x_0, x_1, \dots, x_k$ with $x_0 = u, x_k = v$.
Then $c(w, u) = c(w, x_0) = c(w, x_1) = \dots = c(w, x_k) = c(w, v)$, as desired.
\end{proof}

\section{Proof of Theorem~\ref{thm:mainC}}
\label{sec:proofC}
We now prove that in any Gallai coloring,
we have \[ \sum_{i = 1}^n \frac{1}{2^{d(i)}} \ge 1. \]

\begin{proof}
    [Proof of Theorem~\ref{thm:mainC}]
    We will prove this statement by induction by $n$. It's trivial for $n\le 3$.

    Consider $n\ge 4$.
    If there are at most $2$ different colors,
    then $\sum_{i = 1}^n \frac{1}{2^{d(i)}} \ge \frac{n}{4} \ge 1$, as desired.
    Otherwise, there exists a color that spans a disconnected graph.
    Wlog, this is color $1$.

    Consider any connected component of color $1$, $A$.
    By Lemma~\ref{lem:simple}, for any node $v\notin A$
    there exists some color $c(v)$,
    such that all edges between $v$ and $A$ have color $c(v)$.

    Let $k$ be the number of different colors among $c(v)$ (for $v\notin A$).
    Consider the graph obtained by compressing $A$ into a single node $a$ with $d(a) = k$.
    Here, $c(a, v) = c(v)$ for $v \notin A$.
    By the induction hypothesis, we have:
    \[ \sum_{v \notin A} \frac{1}{2^{d(v)}} + \frac{1}{2^k} \ge 1 \]
    Now, consider separately the graph spanned by nodes in $A$.
    Let $d_1(v)$ denote the color degree of node $v$ with respect to this graph
    (where $v \in A$).
    By the induction hypothesis, we have:
    \[ \sum_{v \in A} \frac{1}{2^{d_1(v)}} \ge 1 \]
    Now note that for each $v\in A$, $d(v) \le d_1(v) + k$
    (as outside of $A$ $v$ has precisely $k$ different colors).
    Then we can write:
    \begin{align*}
    \sum_{i = 1}^n \frac{1}{2^{d(i)}}
    &= \sum_{v \notin A} \frac{1}{2^{d(v)}} + \sum_{v \in A} \frac{1}{2^{d(v)}} \\
    &\ge \sum_{v \notin A} \frac{1}{2^{d(v)}} + \sum_{v \in A} \frac{1}{2^{d_1(v) + k}} \\
    &= \sum_{v \notin A} \frac{1}{2^{d(v)}}
    + \frac{1}{2^k}\sum_{v \in A} \frac{1}{2^{d_1(v)}} \\
    &\ge \sum_{v \notin A} \frac{1}{2^{d(v)}} + \frac{1}{2^k} \\
    &\ge 1. \qedhere
    \end{align*}
\end{proof}

\section{Proof of Theorem~\ref{thm:mainA}}
\label{sec:proofA}
We first prove the following lemma, which may be of independent interest.

\begin{lemma}
    Consider Gallai coloring of $K_n$, in which one of color degrees is $n-1$.
    Then the color degrees are $n-1, n-1, n-2, n-3, \dots, 1$ in some order.
    \label{lem:maxcase}
\end{lemma}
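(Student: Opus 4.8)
Let $v$ be a vertex with $d(v) = n-1$. Since $v$ has exactly $n-1$ incident edges and they carry $n-1$ distinct colors, I may name the other vertices $1, 2, \dots, n-1$ so that the edge $vi$ has color $i$ (i.e.\ I use the vertex labels as the names of these $n-1$ colors). The first step is to observe, via the Gallai condition applied to the triangle $v\,i\,j$ for any $i \neq j$: since $c(v,i) = i$ and $c(v,j) = j$ are distinct, the triangle avoids being rainbow only if $c(i,j) \in \{i, j\}$. Hence \emph{every} edge among $\{1,\dots,n-1\}$ is colored by one of its two endpoints, and in particular the whole coloring uses only the colors $1, \dots, n-1$.

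Next I would encode this data as a tournament $T$ on vertex set $\{1, \dots, n-1\}$: orient the edge $ij$ toward whichever endpoint gives its color (so $i \to j$ in $T$ exactly when $c(i,j) = j$). A short count then shows $d(i) = 1 + \mathrm{outdeg}_T(i)$ for each $i \in \{1,\dots,n-1\}$: the color $i$ always appears at vertex $i$ (on the edge $vi$), and for $j \neq i$ the edge $ij$ contributes the \emph{new} color $j$ to vertex $i$ precisely when $c(i,j) = j$, i.e.\ when $i \to j$ in $T$. The key structural claim is that $T$ is transitive. Indeed, a $3$-cycle $a \to b \to c \to a$ in $T$ would mean $c(a,b) = b$, $c(b,c) = c$, $c(c,a) = a$, three distinct colors — a rainbow triangle, contradiction. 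A tournament with no $3$-cycle is transitive (a standard fact, which I would state or quote), so $T$ is transitive.

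A transitive tournament on $n-1$ vertices has out-degree sequence exactly $0, 1, 2, \dots, n-2$ (in the order given by its linear ordering), so $\{\,d(i) : i \neq v\,\} = \{1, 2, \dots, n-1\}$ as a multiset; together with $d(v) = n-1$ this gives the full multiset $n-1, n-1, n-2, \dots, 2, 1$, as claimed. The only place requiring care is the transition to the tournament picture — pinning down the identity $d(i) = 1 + \mathrm{outdeg}_T(i)$ and recognizing that the Gallai condition on the vertices other than $v$ is equivalent to transitivity of $T$; once that is set up, the rest is immediate, and there is no serious obstacle. (I note that the converse — that any transitive $T$ yields a genuine Gallai coloring — also holds, since triangles through $v$ use only two colors and triangles avoiding $v$ inherit a transitive, hence non-rainbow, pattern; this is not needed for the lemma but confirms the characterization is exact.)
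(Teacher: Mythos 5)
Your proof is correct and follows essentially the same route as the paper: the triangle through the maximum-degree vertex forces $c(i,j)\in\{i,j\}$, the resulting tournament has no directed $3$-cycle (else a rainbow triangle), hence is transitive, and its degree sequence yields color degrees $1,2,\dots,n-1$ on the remaining vertices. The only cosmetic differences are your opposite orientation convention (counting out-degrees rather than in-degrees) and that you quote, rather than reprove, the standard fact that a $3$-cycle-free tournament is transitive, which the paper verifies with a short shortest-cycle argument.
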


\begin{proof}
    Without loss of generality, let node $n$ have $d(n) = n-1$.
    All colors $c(i, n)$ for $1 \le i \le n-1$ are different, wlog $c(i, n) = i$ for $1 \le i \le n-1$.

    For any $1 \le i < j \le n-1$, from triangle $ijn$ we get that $c(i, j)$ is $i$ or $j$. For each $i, j$, let's draw an oriented edge $i \to j$ if $c(i, j) = i$, and $j \to i$ if $c(i, j) = j$. As there are no rainbow triangles, there are no directed cycles of length $3$ in this graph.

    Remember that if the tournament graph contains no directed cycle of length $3$, it is acyclic. This is a well-known fact, and can be proved by considering the shorted directed cycle $(x_1, x_2, \dots, x_k, x_1)$. Clearly $k \ge 4$. If edge between $x_1, x_3$ is oriented $x_3 \to x_1$, then there is a shorter directed cycle $(x_1, x_2, x_3, x_1)$, else there is a shorter directed cycle $(x_3, x_4, \dots, x_1, x_3)$. So, nodes $1, 2, \dots, n-1$ can be arranged in a row, so that all edges go from left to right. The color degree of the $i$-th node in this row will be exactly $i$.
\end{proof}

\begin{proof}
    [Proof that Theorem~\ref{thm:mainC} implies Theorem~\ref{thm:mainA}]

    For $k = 1$ this result is the same as the result of
    Theorem~\ref{thm:mainC}. Let's consider $k \ge 2$.

    Consider any $i \ge k$. How many different colors go from $i$ to nodes $1, 2,
    \dots, k-1$? Suppose that there are edges from at least $x$ different colors
    from $i$ to nodes from $1$ to $k-1$. Let's choose one edge of each color, and
    denote the endpoints of these edges different from $i$ by $v_1, v_2, \dots,
    v_x$. Consider graph on nodes $v_1, v_2, \dots, v_x$ and node $i$. It contains
    $x+1$ nodes, and the color degree of node $i$ in it is $x$, so in this subgraph,
    at least one more node has to have color degree $x$, by Lemma~\ref{lem:maxcase}.
    But then $x \le d(k-1)$.

    Now, consider subgraph on nodes $k, k+1, \dots, n$. For any $i \ge k$, there
    are at most $d(k-1)$ colors going from $i$ to nodes $1, 2, \dots, k-1$, so the
    color degree of node $i$ in this new graph is at least $d(i) - d(k-1)$. Then we
    can just apply Theorem~\ref{thm:mainC} to this graph.
\end{proof}

\section{Proof of Theorem~\ref{thm:mainB}}
\label{sec:proofB}

Conversely, we prove that the condition from the Theorem~\ref{thm:mainA}
is also a sufficient condition for the existence of the Gallai coloring
with such a color degree sequence.

First, we will make a tool for constructing color degree sequences of Gallai
colorings of $K_{n+1}$ from color degree sequences of Gallai colorings of
$K_{n}$. For clarity, I will call valid color degree sequences of Gallai
colorings just \textit{valid sequences} (not necessarily sorted).

\begin{lemma}
    Suppose that $d_1, d_2, \dots, d_n$ is a valid sequence for $n$.
    Then, for any $v$, the following sequences are valid for $n+1$:
    \begin{itemize}
        \item $d_1, d_2, \dots, d_{v-1}, d_v, d_v, d_{v+1}, d_{v+2}, \dots, d_n$
        \item $d_1, d_2, \dots, d_{v-1}, d_v+1, d_v+1, d_{v+1}, d_{v+2}, \dots, d_n$
    \end{itemize}
    \label{lem:constructor}
\end{lemma}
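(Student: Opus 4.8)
The plan is to use a vertex-duplication (``twin'') construction. Fix a Gallai coloring of $K_n$ realizing the sequence $d_1,\dots,d_n$, and let $v$ be a vertex of color degree $d_v$. Build $K_{n+1}$ by adding a new vertex $v'$ and setting $c(v',w):=c(v,w)$ for every old vertex $w\neq v$, so that $v'$ is an exact copy of $v$ on all edges to $K_n$. The only edge whose color is not yet fixed is $(v,v')$, and the two bullets of the lemma will correspond to two different choices of $c(v,v')$.

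First I would check that the extended coloring is Gallai, regardless of the color put on $(v,v')$. A triangle avoiding $v'$ already occurs in the original $K_n$, hence is not rainbow. A triangle $\{v,v',w\}$ has $c(v,w)=c(v',w)$ by construction, so it uses at most two colors. A triangle $\{v',w_1,w_2\}$ with $w_1,w_2\notin\{v,v'\}$ has exactly the same three edge colors as the original triangle $\{v,w_1,w_2\}$, which was not rainbow. So no rainbow triangle is created.

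Next I would track the color degrees. For any old vertex $w\neq v$, the only new edge incident to $w$ is $(w,v')$, whose color $c(v,w)$ already appears on $(w,v)$; hence $d(w)$ is unchanged. The colors on the edges joining $v'$ to old vertices other than $v$ are exactly the $d_v$ colors on the edges joining $v$ to those vertices; once the edge $(v,v')$ is added, both $v$ and $v'$ have color degree $d_v$ if $c(v,v')$ equals one of these $d_v$ colors, and color degree $d_v+1$ if $c(v,v')$ is a color not used anywhere else in $K_n$. For the first bullet, choose $c(v,v')$ to be any color already incident to $v$ (one exists provided $d_v\ge 1$, which holds for every vertex as soon as $n\ge 2$); for the second bullet, choose $c(v,v')$ to be a brand-new color. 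Reindexing so that $v'$ is placed immediately after $v$ then gives precisely the two sequences in the statement.

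There is no real obstacle in this argument. The only points that need care are confirming that the rainbow-triangle case analysis is exhaustive — which is immediate from the twin property, since every new triangle either repeats a color or reuses the color set of an old triangle — and the bookkeeping that duplicating $v$ leaves every other vertex's color degree untouched, together with the trivial corner case $d_v=0$.
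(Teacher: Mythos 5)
Your proof is correct and follows essentially the same route as the paper: duplicate $v$ into a twin $v'$ with identical edge colors, then color the edge $(v,v')$ either with a color already incident to $v$ (keeping both degrees at $d_v$) or with a fresh color (raising both to $d_v+1$). Your explicit check that no rainbow triangle is created is a detail the paper leaves implicit, but the argument is the same.
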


\begin{proof}
    Consider any coloring of $K_n$ with $d(i) = d_i$ for all $i$. Let's
    duplicate node $v$: add a node $v_1$, with $c(v_1, i) = c(v, i)$ for $i \neq
    v$. Then, for all nodes except $v, v_1$, the color degree remains the same.

    Now, if we choose $c(v, v_1)$ equal to some color present among colors of
    edges incident to $v$, the color degree of $v$ won't change, else it will
    increase by $1$. These cases give two valid sequences for $n+1$.
\end{proof}

We can now prove Theorem~\ref{thm:mainB}.
\begin{proof}
    We will prove this theorem by induction by $n$.
    It clearly holds for $n = 2$, consider $n \ge 3$.

    Let $d_n = t$. From the condition for $k = n$ we get
    \[ \frac{1}{2^{t - d_{n-1}}} \ge 1, \]
    so $d_{n-1} = t$.
    We will show that at least one of the sequences
    $(d_1, d_2, \dots, d_{n-2}, t)$ and $(d_1, d_2, \dots, d_{n-2}, t-1)$ is valid for $n-1$,
    therefore the result will follow by Lemma~\ref{lem:constructor}.

    Let $m$ be the number of occurrences of $t$ in this color sequence,
    we know $m \ge 2$. We have $d_{n-m} < t$, and
    $d_{n-m+1} = d_{n-m+2} = \dots = d_n = t$.
    We will show that:

    \begin{itemize}
        \item If $m$ is even, the sequence
          \[ d_1, d_2, \dots, d_{n-m}, t-1, \underbrace{t, t, \dots,
          t}_\text{$m-2$ times} \]
        is valid for $n-1$.
        \item If $m$ is odd, the sequence
          \[ d_1, d_2, \dots, d_{n-m}, \underbrace{t, t, \dots, t}_\text{$m-1$
          times} \]
        is valid for $n-1$.
    \end{itemize}

    Let's first consider even $m$, and the sequence
    \[ d_1, d_2, \dots, d_{n-m}, t-1, \underbrace{t, t, \dots, t}_\text{$m-2$ times}. \]
    The condition is clearly true for $k \ge n-m+3$.
    If $m \ge 4$, it is also true for $k = n-m+2$
    (it's equivalent to $\frac{m-2}{2} \ge 1$).
    Consider $k \le n-m+1$. For such $k$,
    \begin{align*}
        \sum_{i = k}^{n-m} \frac{1}{2^{d_i - d_{k-1}}} + \frac{1}{2^{t-1 - d_{k-1}}} + \frac{m-2}{2^{t - d_{k-1}}}
        &= \sum_{i = k}^{n-m} \frac{1}{2^{d_i - d_{k-1}}} + \frac{m}{2^{t - d_{k-1}}} \\
        &= \sum_{i = k}^{n} \frac{1}{2^{d_i - d_{k-1}}} \\
        &\ge 1.
    \end{align*}

    Now, consider odd $m$, and sequence
    \[ d_1, d_2, \dots, d_{n-m}, \underbrace{t, t, \dots, t}_\text{$m-1$ times}. \]
    The condition is clearly true for $k \ge n-m+2$.
    Consider $k \le n-m+1$.
    We have to show that
    \[ \sum_{i = k}^{n-m} \frac{1}{2^{d_i - d_{k-1}}} + \frac{m-1}{2^{t -
    d_{k-1}}} = \sum_{i = k}^{n-1} \frac{1}{2^{d_i - d_{k-1}}} \ge 1 \]
    We will show that this follows
    from $\sum_{i = k}^{n} \frac{1}{2^{d_i - d_{k-1}}} \ge 1$ and $m$ being odd.
    Multiply this inequality by $2^{t - d_{k-1}}$ to get:
    \[ \sum_{i = k}^n 2^{t - d_i} \ge 2^{t - d_{k-1}} \]
    Note that here right part is even (as $d_{k-1}<t$),
    and the left part is odd (as $2^0$ appears there $m$ times).
    So, we can deduce the following inequality:
    \begin{align*}
        \sum_{i = k}^n 2^{t - d_i} - 1 &\ge 2^{t - d_{k-1}} \\
        \iff  \sum_{i = k}^{n-1} 2^{t - d_i} &\ge 2^{t - d_{k-1}} \\
        \iff \sum_{i = k}^{n-1} \frac{1}{2^{d_i - d_{k-1}}} &\ge 1
    \end{align*}
    This finishes the proof.
\end{proof}


\bibliographystyle{alpha}
\bibliography{refs}

\begin{thebibliography}{FLZ18}

\bibitem[ESS73]{EST}
P.~Erdős, M.~Simonovits, and V.~T. Sós.
\newblock Anti-ramsey theorems.
\newblock {\em Coll Math Soc J Bolyai}, 10:633--643, 1973.

\bibitem[FLZ18]{FLZ}
Shinya Fujita, Ruonan Li, and Shenggui Zhang.
\newblock Color degree and monochromatic degree conditions for short properly
  colored cycles in edge-colored graphs.
\newblock {\em J. Graph Theory}, 87(3):362--373, 2018.

\bibitem[Gal67]{gallai}
T.~Gallai.
\newblock Transitiv orientierbare {G}raphen.
\newblock {\em Acta Math. Acad. Sci. Hungar.}, 18:25--66, 1967.

\bibitem[GS04]{GS}
András Gyárfás and Gábor Simonyi.
\newblock Edge colorings of complete graphs without tricolored triangles.
\newblock {\em Journal of Graph Theory}, 46:211--216, 2004.

\bibitem[LW12]{LW}
Hao Li and Guanghui Wang.
\newblock Color degree and heterochromatic cycles in edge-colored graphs.
\newblock {\em European Journal of Combinatorics}, 33:1958–1964, 2012.

\end{thebibliography}

\end{document}